\tikzset{individu/.style={draw,thick}}
\numberwithin{equation}{section}
\theoremstyle{plain}
\newtheorem{theorem}{Theorem}[section]
\newtheorem{corollary}[theorem]{Corollary}
\newtheorem{lemma}[theorem]{Lemma}
\newtheorem{proposition}[theorem]{Proposition}
\theoremstyle{definition}
\theoremstyle{remark}
\newtheorem{remark}[theorem]{Remark}
\newcommand{\N}{\mathbb{N}}
\newcommand{\Z}{\mathbb{Z}}
\newcommand{\frakC}{\mathfrak{C}}
\newcommand{\ind}[1]{\mathbf{1}_{\left\{#1\right\}}}
\DeclareMathOperator{\E}{\mathbf{E}}
\renewcommand{\P}{\mathbf{P}}
\renewcommand{\epsilon}{\varepsilon}
\renewcommand{\phi}{\varphi}
\newcommand{\Addresses}{{
  \bigskip
  \footnotesize

  Bastien Mallein, \textsc{LAGA - Institut Galil\'ee, 99 avenue Jean-Baptiste Cl\'ement
93430 Villetaneuse, France}\par\nopagebreak
  \textit{E-mail address}: \texttt{mallein@math.univ-paris13.fr}
  
  \medskip

  Sanjay Ramassamy, \textsc{Unit\'e de Math\'ematiques Pures et Appliqu\'ees, \'Ecole normale sup\'erieure de Lyon, 46 all\'ee d'Italie, 69364 Lyon Cedex 07, France}\par\nopagebreak
  \textit{E-mail address}: \texttt{sanjay.ramassamy@ens-lyon.fr}

}}
\title{Two-sided infinite-bin models and analyticity for Barak-Erd\H{o}s graphs}
\author{Bastien Mallein \and Sanjay Ramassamy}
\date{\today}
\newcommand{\calP}{\mathcal{P}}
\begin{document}

\maketitle

\begin{abstract}
In this article, we prove that for any probability distribution $\mu$ on $\mathbb{N}$ one can construct a two-sided stationary version of the infinite-bin model --an interacting particle system introduced by Foss and Konstantopoulos-- with move distribution $\mu$. Using this result, we obtain a new formula for the speed of the front of infinite-bin models, as a series of positive terms. This implies that the growth rate $C(p)$ of the longest path in a Barak-Erd\H{o}s graph of parameter $p$ is analytic on $(0,1]$. 
\end{abstract}

\section{Introduction and main results}

This article introduces a new approach to the study of infinite-bin models, a family of interacting particle systems introduced by Foss and Konstantopoulos \cite{FK} which yields new results not only for this particle system but also for Barak-Erd\H{o}s graphs \cite{BE}, which are a natural class of random directed acyclic graphs.

Roughly speaking, the infinite-bin model is a random discrete-time dynamics on configurations of balls in an infinite row of bins, where a new ball is added inside some bin at each step of time according to some random rule, where the randomness is governed by a probability distribution $\mu$ on $\N$. Each configuration has a well-defined notion of front (a non-empty bin such that all the bins to its right are empty) and one of the most interesting observables in this model is the speed at which the front moves to the right. In this article we construct for every probability distribution $\mu$ on $\N$ a two-sided stationary version of the infinite-bin model (time is indexed by $\Z$ rather than $\Z_+$) and we use this construction to express the speed of the front as a series of positive terms.

The Barak-Erd\H{o}s graph with edge probability $p$ is a directed acyclic version of the classical Erd\H{o}s-R\'enyi random graph with edge probability $p$ \cite{ER}. Foss and Konstantopoulos \cite{FK} introduced a coupling between the Barak-Erd\H{o}s graph with edge probability $p$ and the infinite-bin model where $\mu$ is the geometric distribution of parameter $p$, whereby the growth rate $C(p)$ of the length of the longest directed path in the Barak-Erd\H{o}s graph with edge probability $p$ equals the speed of the front of the infinite-bin model associated with the geometric distribution of parameter $p$. Using the above-mentioned series formula for the speed of infinite-bin models, we manage to prove that the function $C(p)$ is analytic for $0< p \leq 1$.

In the rest of the introduction, we first describe Barak-Erd\H{o}s graphs and we state the analyticity result for $C(p)$. Then we formally introduce the infinite-bin model, we state the results about the existence of a two-sided stationary version and a formula for the speed of the front and finally we discuss the connection with other probabilistic models.

\subsection{Barak-\texorpdfstring{Erd\H{o}s}{Erdos} graphs}

Given an integer $n\geq1$ and a parameter $0 \leq  p \leq 1$, the Barak-Erd\H{o}s graph $G_{n,p}$ is the graph with vertex set $\left\{1,\ldots,n\right\}$ obtained by adding an edge directed from $i$ to $j$ with probability $p$ for every pair $(i,j)$ with $1 \leq i <j  \leq n$, independently for each pair. This model was introduced by Barak and Erd\H{o}s~\cite{BE} and has since then been widely considered. The most studied feature of these graphs $G_{n,p}$ has been the length of their longest path $L_n(p)$, with applications including food chains~\cite{CN,NC}, the speed of parallel processes~\cite{GNPT,IN}, last passage percolation~\cite{FMS} and the stability of queues~\cite{FK}. Some extensions of the model were considered in~\cite{DFK,KT}.

Newman~\cite{N} proved that there exists a function $C: [0,1] \to [0,1]$ such that for any $0 \leq p \leq 1$,
\begin{equation}
  \label{eqn:defC}
  \lim_{n \to \infty} \frac{L_n(p)}{n} = C(p) \quad \text{in probability.}
\end{equation}
Moreover he showed that the function $C$ is continuous, differentiable at $0$ and that $C'(0) = e$ (see Figure~\ref{fig:cpgraph} for a plot of $C(p)$).

\begin{figure}[htbp]
\centering
\includegraphics[height=2in]{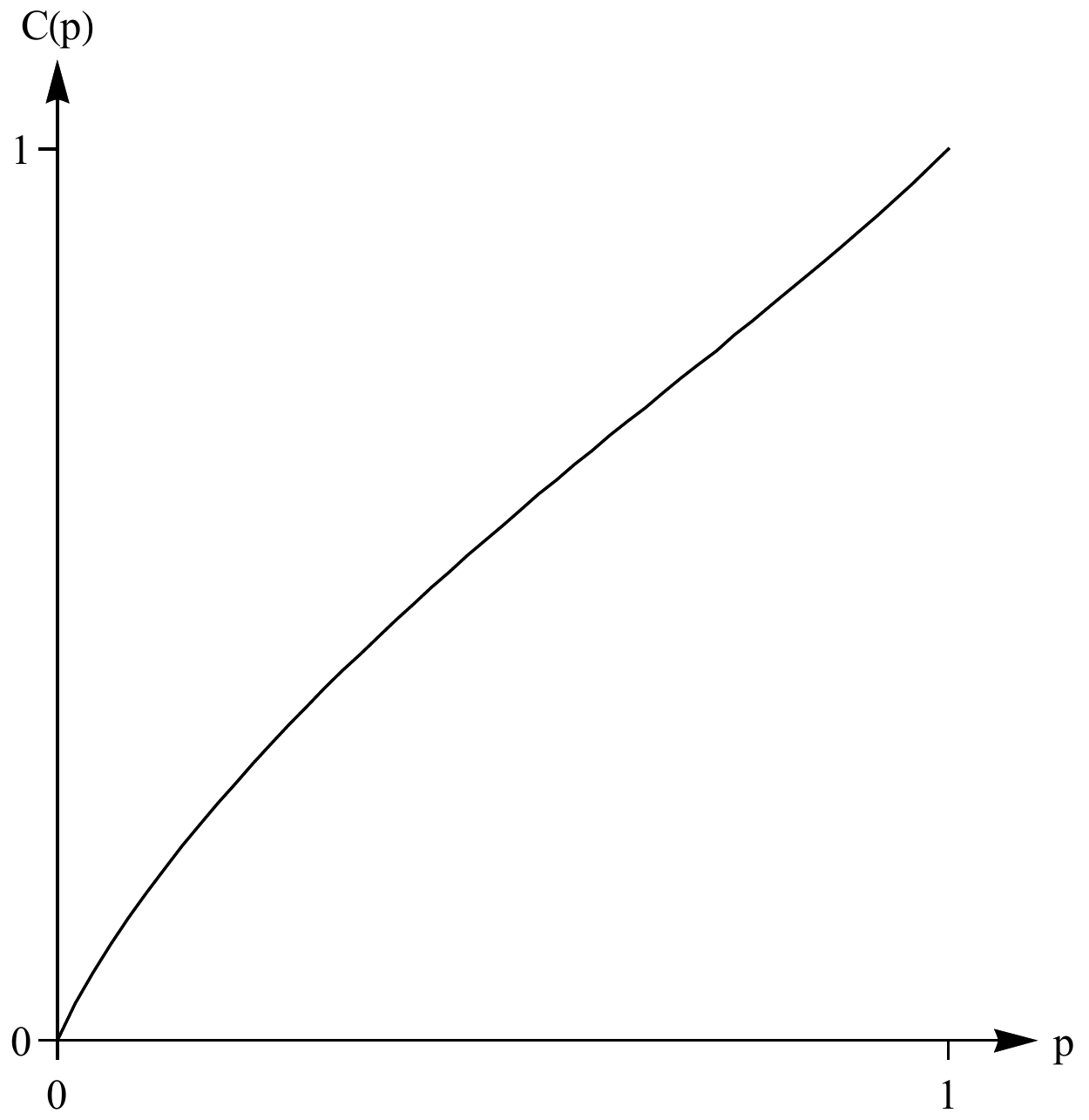}
\caption{Plot of a simulation of $C(p)$.}
\label{fig:cpgraph}
\end{figure}

Using the coupling with infinite-bin models mentioned above, Foss and Konstantopoulos~\cite{FK} obtained upper and lower bounds for the function $C$ which are tight in a neighborhood of $1$. In~\cite{MR}, we proved that $C$ is an analytic function on $(\frac{1}{2},1]$ and showed that the power series expansion of $C(p)$ centered at $1$ has integer coefficients. Moreover, we proved that
\[
  C(p)  = pe \left( 1 - \frac{\pi^2(1 + o(1))}{2 (\log p)^2}\right) \text{ as } p \to 0.
\]
In particular, this implied that $C$ has no second derivative at $p=0$. We raised the question whether there exists a phase transition for some $0<p_0<1$, where the function $C$ stops being analytic. In this paper we provide a negative answer to this question.

\begin{theorem}
\label{thm:analyticity}
The function $p \mapsto C(p)$ is analytic on $(0,1]$.
\end{theorem}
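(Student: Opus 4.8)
The plan is to deduce analyticity of $C$ from the Foss--Konstantopoulos coupling together with the series formula for the speed of the front. Writing $\mu_p$ for the geometric distribution of parameter $p$, the coupling gives $C(p)=v_{\mu_p}$, the speed of the front of the infinite-bin model with move distribution $\mu_p$. By the series formula established in this paper, $v_\mu=\sum_{n\ge 1}a_n(\mu)$ with $a_n(\mu)\ge 0$; I will use that each $a_n(\mu)$ is a polynomial with nonnegative coefficients in the finitely many probabilities $\mu(1),\dots,\mu(n)$ (it records the total weight of the configurations of ``size'' $n$ contributing to the displacement of the front). Since $\mu_p(k)$ is a polynomial in $p$, each $\widetilde a_n\colon p\mapsto a_n(\mu_p)$ is then a polynomial, hence entire, and the partial sums $S_N=\sum_{n=1}^{N}\widetilde a_n$ are polynomials converging pointwise to $C$ on $(0,1]$. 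The whole problem is therefore to upgrade this pointwise convergence to locally uniform convergence on a complex neighbourhood of $(0,1]$: analyticity of $C$ then follows from Weierstrass's theorem on uniform limits of holomorphic functions.

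To obtain uniform control I would complexify the parameter: for $z\in\mathbb{C}$ put $\mu_z(k):=z(1-z)^{k-1}$ and $\widetilde a_n(z):=a_n(\mu_z)$, a polynomial in $z$. Because the coefficients of $a_n$ are nonnegative, $|\widetilde a_n(z)|\le a_n(\nu_z)$, where $\nu_z$ is the positive measure on $\N$ given by $\nu_z(k):=|z|\,|1-z|^{k-1}$. Fix a compact $K\subset(0,1]$ and set $p_-:=\min K>0$. For $z$ in a small enough complex neighbourhood $\Omega$ of $K$ one has $|z|\le 2$ and $|1-z|\le 1-p_-/2=:\sigma<1$, so $\nu_z(k)\le 2\sigma^{k-1}$ uniformly over $\Omega$: the measures $\nu_z$ have geometric tails with a rate $\sigma$ bounded away from $1$. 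The key step is then an estimate of the form $a_n(\nu)\le K_\sigma\,\gamma_\sigma^{\,n}$ with $\gamma_\sigma<1$, valid for every positive measure $\nu$ on $\N$ with $\nu(k)\le 2\sigma^{k-1}$. Granting this, $\sum_{n\ge 1}\sup_{z\in\Omega}|\widetilde a_n(z)|\le\sum_{n\ge 1}K_\sigma\gamma_\sigma^{\,n}<\infty$, so by the Weierstrass $M$-test $\sum_n\widetilde a_n$ is holomorphic on $\Omega$; as it agrees with $C$ on $\Omega\cap(0,1]$, the function $C$ is analytic on a neighbourhood of $K$, and since $K$ was an arbitrary compact subset of $(0,1]$, on all of $(0,1]$.

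The main obstacle is exactly the uniform bound $a_n(\nu)\le K_\sigma\gamma_\sigma^{\,n}$. Knowing merely that $\sum_n a_n(\mu_p)=C(p)$ is finite with nonnegative terms is not enough, since positivity --- and with it any such soft control --- is destroyed upon complexifying $p$; one genuinely needs the terms of the series to decay geometrically in $n$ at a rate uniformly below the critical one, \emph{and} with enough room to absorb the fact that the total mass of $\nu_z$ exceeds $1$. I expect this to come from the combinatorics underlying the series formula: if $a_n$ sums the weights of a family of configurations of size $n$ whose cardinality grows at most exponentially in $n$ and whose individual weights decay geometrically in $n$ whenever the move distribution has a geometric tail of rate $\sigma<1$, one gets the desired geometric decay precisely when $\sigma$ stays bounded away from $1$ --- which is where the hypothesis $p>0$ enters, and which is consistent with the known breakdown of analyticity (indeed of twice-differentiability) at $p=0$. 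With that estimate in hand, the remaining complex-analytic steps are routine.
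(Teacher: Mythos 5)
Your complex-analytic superstructure is sound and is essentially the one the paper uses: by Corollary~\ref{cor:Cpformula} the speed is the bivariate power series $D(p,q)=\sum_{\alpha\in\mathcal{G}_m}p^{|\alpha|}q^{\sum_j(\alpha_j-1)}$ with nonnegative coefficients and $C(p)=D(p,1-p)$, so analyticity near $p_0$ follows once one exhibits a single real point $(p',q')$ with $p'>p_0$ and $q'>1-p_0$ at which the series converges; positivity of the coefficients then absorbs the complexification, exactly as in your Weierstrass step. The genuine gap is that you have not proved, only conjectured, the key estimate $a_n(\nu)\le K_\sigma\gamma_\sigma^{\,n}$; this is where all the work of the proof lies, and the combinatorial route you sketch for it cannot work as stated. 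Group the series by word length: the total $\mu_p$-weight of \emph{all} words of length $n$ is $\left(\sum_{k\ge1}p(1-p)^{k-1}\right)^n=1$, so no bound of the form ``at most exponentially many contributing words, each of geometrically small weight'' can produce geometric decay in $n$ --- the individual weights are not uniformly geometrically small (e.g.\ $w_{\mu_p}(1^n)=p^n$ is close to $1$ termwise for $p$ near $1$), and the number of candidate words of length $n$ is super-exponential. What is actually needed is that the minimal good words of length $n$ carry an exponentially small \emph{fraction} of the total probability, uniformly for $p$ in a compact subset of $(0,1]$.

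The paper obtains exactly this probabilistically: writing $T=\inf\{n\ge0:\xi_{-n}^1\in\mathcal{G}\cup\mathcal{B}\}$, one has $D(rp,1-p)\le\E_{\mu_p}(r^{T+2})$, and $T\le\tau_1$, the first time the reversed sequence spells a $1$-coupling word; the uniform exponential moments of $\tau_1$ are then extracted from a renewal structure (waiting times for letters $\ge K$ versus occurrences of the letter $1$, an associated nearest-neighbour random walk with drift at most $-1/3$, its descending ladder epochs, and the appearance of the word $1^{K+1}$ at such an epoch). Some argument of this kind --- or another genuine proof that the generating series of minimal good words converges strictly beyond the curve $q=1-p$, uniformly for $p$ bounded away from $0$ --- is indispensable; without it your proposal establishes nothing beyond the pointwise identity $C(p)=D(p,1-p)$.
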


The proof of this theorem is based on a series formula for $C(p)$ which follows from a coupling between Barak-Erd\H{o}s graphs and a specific subclass of infinite-bin models. The next subsection is devoted to general infinite-bin models.

\subsection{The infinite-bin model}

The infinite-bin model is an interacting particle system on $\Z$ that can be described as follows. Consider a set of bins indexed by $\Z$, each bin containing a finite number of balls. There is a well-defined notion of front, which is a non-empty bin such that all the bins to its right are empty and the bins to its left are non-empty. Given an i.i.d. sequence $(\xi_n, n \geq 1)$ of positive integers, the infinite-bin model evolves as follows: at every time $n$, a new ball is added to the bin immediately to the right of the bin containing the $\xi_n$th rightmost ball. We denote by $\mu$ the distribution on $\N$ of the random variable $\xi_1$.

This process was introduced by Foss and Konstantopoulos~\cite{FK} in order to study Barak-Erd\H{o}s graphs and further studied in~\cite{FZ,CR,MR}. Note that in the original description~\cite{FK}, the sequence $(\xi_n)$ is only supposed to be stationary and ergodic, not necessarily i.i.d. Constructing the stationary version of an infinite-bin model could be done in ergodic settings as well under mild assumptions, but we choose to stick to the i.i.d. setting to keep the proofs simple.

We introduce some notation to define infinite-bin models more precisely. A \emph{configuration} of balls $X$ is defined to be any collection $(X(k), k \in \Z) \in \Z_+^\Z$ (where $X(k)$ represents the number of balls in the bin of index $k$) such that there exists $F(X) \in \Z$ satisfying the following two conditions:
\begin{enumerate}
  \item every bin with an index smaller or equal to $F(X)$ is non-empty ;
  \item every bin with an index strictly larger than $F(X)$ is empty.
\end{enumerate}
The index $F(X)$ of the rightmost non-empty bin is called the position of the \emph{front} of the configuration $X$. We denote by $S$ the set of configurations. Note that one could allow bins containing infinitely many balls or even empty bins to the left of the front, provided the total number of balls in a configuration is infinite, and the process would remain well-defined. However, the space $S$ is stable under the dynamics of the infinite-bin model, and we will only consider infinite-bin models as Markov processes on $S$, again for the sake of simplifying the proofs.

Let $k \geq 1$ be an integer, we define a \emph{move of type} $k$ as a map $\Phi_k$ from the set of configurations to itself. For any $X \in S$ and $k \in \N$, we set
\[
N(X,k) = \sum_{j \geq k} X(j) \quad \text{and} \quad B(X,k) = \sup\{ n \in \N: N(X,n) \geq k\},
\]
which are respectively the number of balls in or to the right of bin number $k$ and the index of the bin containing the $k$th rightmost ball. We define
\[
  \Phi_k: X \in S \mapsto (X(j) + \ind{j=B(X,k)+1}, j \in \Z)\in S.
\]
In other words, $\Phi_k(X)$ is the configuration obtained from $X$ by adding one ball to the bin immediately to the right of the bin containing the $k$th rightmost ball.

Let $\mu$ be a probability distribution on $\N$ and $X_0$ be an initial configuration, which may be deterministic or random. We construct $(X_n)_{n \geq 0}$ the infinite-bin model with move distribution $\mu$ (or IBM($\mu$) for short) as the following stochastic recursive sequence in $S$:
\[
  \forall n \geq0, \quad X_{n+1} = \Phi_{\xi_{n+1}}(X_n),
\]
where $(\xi_n)_{n\geq1}$ is an i.i.d. sequence of random variables of law $\mu$. Foss and Konstantopoulos~\cite{FK} proved that when $\mu$ has finite expectation there exists a constant $v_\mu \in [0,1]$ independent of the initial configuration $X_0$ such that
\begin{equation}
  \label{eqn:defineSpeed}
  \lim_{n \to \infty} \frac{F(X_n)}{n} = v_\mu \quad \text{a.s. and in } L^1.
\end{equation}
In~\cite{MR}, we proved that this result holds without any assumption on the measure~$\mu$. The constant $v_\mu$ is called the speed of the IBM($\mu$).

In this article, we express the speed $v_\mu$ as the sum of a series with positive terms for a general move distribution $\mu$. This series representation of $v_\mu$ is based on the appearance of special patterns in the sequence $(\xi_n, n \geq 1)$. To write it down more precisely, we introduce some notation on finite patterns.

We denote by
\[ \mathcal{W} = \bigcup_{n \geq 0} \N^n \]
the set of all finite sequences of positive integers. Sequences in $\mathcal{W}$ will simply be called \emph{words} and an element of a word will be called a \emph{letter}. By convention $\emptyset$ is the only element of $\N^0$, called the \emph{empty word}. Given a word $\alpha \in \mathcal{W}$, we denote by $|\alpha|$ the length of $\alpha$ (i.e. its number of letters), and for $1 \leq k \leq |\alpha|$, by $\alpha_k$ its $k$th letter. Furthermore, if $I$ is an interval (possibly infinite) of $\Z$  and $\alpha=(\alpha_i)_{i \in I}$ is a sequence of positive integers indexed by $I$, for any $n_1\leq n_2 \in I$ we denote by $\alpha_{n_1}^{n_2}$ the word $(\alpha_{n_1},\alpha_{n_1+1},\ldots,\alpha_{n_2})$ of length $n_2-n_1+1$. Finally, for two words $\alpha$ and $\beta$, the word $\alpha\cdot \beta$ is defined to be the concatenation of $\alpha$ and $\beta$, i.e. if $\alpha = (\alpha_1,\ldots, \alpha_n) \in \N^n$ and $\beta =(\beta_1,\ldots,\beta_p) \in \N^p$, we set
\[
  \alpha \cdot \beta = (\alpha_1,\ldots, \alpha_n,\beta_1,\ldots, \beta_1) \in \N^{n+p}.
\]

Recall that the map $\Phi_k$ denotes a single move of type $k$. We extend the notation by defining the map $\Phi_\alpha$ for every $\alpha = (\alpha_1,\ldots,\alpha_n) \in \mathcal{W}$ by
\begin{equation}
  \label{eqn:defPhiword}
  \forall X \in S, \quad  \Phi_\alpha(X) = \left(\Phi_{\alpha_n} \circ \Phi_{\alpha_{n-1}} \circ \cdots \circ \Phi_{\alpha_1} \right)(X).
\end{equation}
In other words, $\Phi_\alpha(X)$ is the configuration obtained from $X$ by successively applying the moves of type $\alpha_1,\alpha_2,\ldots,\alpha_n$. Using this notation, we define for every $X \in S$ the set of $X$\emph{-good words} as
\[
  \mathcal{P}_X= \left\{ \alpha \in \mathcal{W} \backslash \{ \emptyset \}: F(\Phi_\alpha(X)) > F(\Phi_{\alpha_1^{|\alpha|-1}}(X)) \right\},
\]
i.e. the set of finite sequences of moves such that, starting from $X$, the final move makes the front advance to the right by one unit, by adding a ball in a previously empty bin.

We define the set of \emph{good words} to be the words that are $X$-good for every starting configuration $X$, as well as the set of \emph{bad words} to be the words that are $X$-good for no initial configuration $X$, i.e.
\begin{equation}
  \label{eqn:defGoodandBad}
  \mathcal{G}= \bigcap_{X \in S} \calP_X \quad \text{and} \quad  \mathcal{B}= \bigcap_{X \in S} \calP_X^c.
\end{equation}
Observe that with these definitions, while the complement of $X$-good words is $X$-bad words, the complement of $\mathcal{G}$ is larger than $\mathcal{B}$, i.e. $\mathcal{G} \cup \mathcal{B} \subsetneq \mathcal{W}$. In other terms, there are three types of words: good words, bad words and words that are neither good nor bad. For example, the words $(1)$ and $(1,1)$ are good, the words $(1,2)$ and $(2,1,2)$ are bad and the word $(2,2)$ is neither good nor bad. Finally, we define the set of \emph{minimal good words} (resp. \emph{minimal bad words}) as the good (resp. bad) words that have no good (resp. bad) strict suffix:
\begin{align}
  \label{eqn:defMinGoodandBad}
  \mathcal{G}_m &= \left\{ \alpha \in \mathcal{G}: \forall\ 2 \leq k \leq |\alpha|, \alpha_k^{|\alpha|} \not \in \mathcal{G} \right\}  \\
 \mathcal{B}_m &= \left\{ \alpha \in \mathcal{B}: \forall\ 2 \leq k \leq |\alpha|, \alpha_k^{|\alpha|} \not \in \mathcal{B} \right\}.
\end{align}
For example, the word $(1)$ (resp. $(1,2)$) is minimal good (resp. minimal bad) while the word $(1,1)$ (resp. $(2,1,2)$) is not minimal good (resp. not minimal bad).

A probability distribution $\mu$ on $\N$ is called \emph{non-degenerate} if it is not a Dirac mass, i.e. if its support contains at least two elements. We obtain in this article the following formula, which holds for the speed of the front of any infinite-bin model whose move distribution is non-degenerate.

\begin{theorem}
\label{thm:main}
Fix a non-degenerate probability distribution $\mu$ on $\N$. For any $\alpha\in\mathcal{W}$, we set $w_\mu(\alpha) = \prod_{j=1}^{|\alpha|} \mu(\alpha_j)$. Then we have
\begin{equation}
\label{eq:speedformula}
  v_\mu = \sum_{\alpha \in \mathcal{G}_m} w_\mu(\alpha) = 1 - \sum_{\alpha \in \mathcal{B}_m} w_\mu(\alpha).
\end{equation}
\end{theorem}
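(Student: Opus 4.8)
The plan is to work inside the two-sided stationary infinite-bin model $(X_n)_{n\in\Z}$ constructed earlier in the paper, driven by a two-sided i.i.d.\ sequence $(\xi_n)_{n\in\Z}$ of law $\mu$, and to reduce everything to a single step, say from time $-1$ to time $0$. First I would note that a single move changes the front by $0$ or $1$: since $B(X,k)\le F(X)$ for every $k\ge 1$, the ball added by $\Phi_k$ lands in a bin of index at most $F(X)+1$, so $F(X_0)-F(X_{-1})\in\{0,1\}$. Combining the a.s.\ convergence $F(X_n)/n\to v_\mu$ from \eqref{eqn:defineSpeed}, stationarity, and dominated convergence applied to the bounded ratios $(F(X_n)-F(X_0))/n$, one obtains $v_\mu=\E[F(X_0)-F(X_{-1})]=\P(F(X_0)>F(X_{-1}))$. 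It therefore suffices to compute the probability that the front advances at time $0$.

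The second step is a ``word recognition'' lemma. Using the flow identities $X_0=\Phi_{\xi_k^0}(X_{k-1})$ and $X_{-1}=\Phi_{\xi_k^{-1}}(X_{k-1})$ for $k\le 0$, if the word $\alpha:=\xi_k^0$ is good then $F(X_0)>F(X_{-1})$, and if $\alpha$ is bad then $F(X_0)=F(X_{-1})$, whatever the configuration $X_{k-1}$ may be; the same computation rules out the possibility that both a good and a bad word end at time $0$. Hence, on the event that some good or some bad word ends at $0$, the front advances at $0$ if and only if there is $\alpha\in\mathcal G_m$ with $\xi_{-|\alpha|+1}^0=\alpha$ (take the shortest good word ending at $0$; it is automatically minimal good), and the events $\{\xi_{-|\alpha|+1}^0=\alpha\}$ for $\alpha\in\mathcal G_m$ are pairwise disjoint because, by minimality, no element of $\mathcal G_m$ is a proper suffix of another. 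The symmetric statement holds with $\mathcal B_m$ for the event that the front does not advance.

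The crux --- and the step I expect to be the main obstacle --- is the completeness statement: almost surely there exists $k\le 0$ with $\xi_k^0\in\mathcal G\cup\mathcal B$. This is exactly where non-degeneracy of $\mu$ enters, and it cannot be dispensed with: for the Dirac mass $\mu$ at $2$ the only words occurring in $(\xi_n)$ are of the form $(2,\dots,2)$, none of which is good or bad, while $v_\mu>0$. One natural route is to read off this completeness from the two-sided construction itself: the stationary version arises as an a.s.\ limit of one-sided infinite-bin models launched arbitrarily far in the past, so for each $n$ the configuration in a fixed window around the front, hence the increment $F(X_n)-F(X_{n-1})$, is a.s.\ a function of only finitely many moves $(\xi_m)_{m\le n}$ --- and one then checks that this finite dependence is precisely recorded by the occurrence of a good or a bad word, non-degeneracy being what makes the coupling-from-the-past actually happen. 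A more combinatorial alternative is to show that any word which is neither good nor bad admits, with probability bounded away from $0$, a bounded left-extension that turns it into a good or a bad word, so that scanning backwards from $0$ one a.s.\ meets $\mathcal G\cup\mathcal B$.

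Granting these three ingredients, the rest is bookkeeping. By the recognition lemma and completeness, $\{F(X_0)>F(X_{-1})\}=\bigsqcup_{\alpha\in\mathcal G_m}\{\xi_{-|\alpha|+1}^0=\alpha\}$ up to a $\P$-null set, so, since $(\xi_n)$ is i.i.d.\ of law $\mu$,
\[
  v_\mu=\P\big(F(X_0)>F(X_{-1})\big)=\sum_{\alpha\in\mathcal G_m}\prod_{j=1}^{|\alpha|}\mu(\alpha_j)=\sum_{\alpha\in\mathcal G_m}w_\mu(\alpha),
\]
the series converging because its partial sums are probabilities of pairwise disjoint events. Running the same argument on the complementary event $\{F(X_0)=F(X_{-1})\}$ with $\mathcal B_m$ in place of $\mathcal G_m$ yields $1-v_\mu=\sum_{\alpha\in\mathcal B_m}w_\mu(\alpha)$, which is the second equality in \eqref{eq:speedformula}.
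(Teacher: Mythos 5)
Your overall architecture is exactly the paper's: reduce to $v_\mu=\P(\text{front advances at a fixed time})$ for the two-sided stationary process (this is Lemma~\ref{lem:useStationarity}), observe that on the event that a good or bad word ends at that time the advance/non-advance is read off from the word regardless of the earlier configuration, take the shortest such word to land in $\mathcal G_m$ (resp.\ $\mathcal B_m$), and use disjointness of the suffix events to sum the weights. Those three of your four steps are correct and complete, including the observation that a good word cannot have a bad suffix.

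The gap is the step you yourself flag as the crux: the almost sure existence of $k\le 0$ with $\xi_k^0\in\mathcal G\cup\mathcal B$. Neither of your two suggested routes is carried out, and route (a) as stated does not work: the statement of Theorem~\ref{thm:biinfinite} only gives that $\Psi_{F(Y_n)}(Y_n)$ is $\mathcal F_n$-measurable, and measurability with respect to the full past $\sigma$-algebra does not imply that the one-step increment is a.s.\ determined by a \emph{finite} suffix of the moves uniformly over what precedes it (tail-measurable functionals are the standard counterexample). The input actually needed is Proposition~\ref{prop:coupling}: a.s.\ there is a finite $n$ such that $\xi_{-n}^0$ is a $1$-coupling word, i.e.\ the content $Y_0(0)$ of the front bin after applying these moves is the same for every starting configuration; then $\xi_{-n}^1$ is good or bad according to whether $\xi_1\le Y_0(0)$ or not, so $T\le\tau_1<\infty$ a.s. Proving $\tau_1<\infty$ is the technical heart of the paper (a random-walk-with-ladder-times argument built on the Chernysh--Ramassamy coupling words, and the only place non-degeneracy of $\mu$ is genuinely used), and your proposal neither supplies it nor correctly reduces to it. Your route (b) is a conceivable alternative but would require a lower bound on the extension probability and length that is uniform over all words that are neither good nor bad, which is not obvious and is not what the paper does.
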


Note that for $\mu=\delta_k$, the Dirac mass at $k$, the infinite-bin model is deterministic and $v_{\delta_k}=1/k$, but for all $k \geq 2$ the equalities~\eqref{eq:speedformula} do not hold.

For non-degenerate $\mu$, we also remark that from \eqref{eq:speedformula}, we deduce that
\[
  \sum_{\alpha \in \mathcal{G}_m} w_\mu(\alpha) + \sum_{\alpha \in \mathcal{B}_m} w_\mu(\alpha) = 1,
\]
which can be interpreted as follows: given $(\xi_{-n}, n \geq 0)$ a family of i.i.d. non-degenerate integer-valued random variables, almost surely there exists $n \geq 0$ such that $\xi_{-n}^0$ is either a good or a bad word. This is indeed a key step of the proof and is a straightforward consequence of Proposition \ref{prop:coupling}. In the remainder of the article, every probability distribution on $\N$ will be assumed to be non-degenerate, unless otherwise stated.

Formula~\eqref{eq:speedformula} for the speed of the infinite-bin model can be compared to the one we obtained in~\cite{MR}. For every $X\in S$, we defined the map
\[
\epsilon_X: \alpha \in \mathcal{W}\setminus \{\emptyset\} \mapsto \ind{\alpha \in \mathcal{P}_X}-\ind{\alpha_2^{|\alpha|} \in \mathcal{P}_X} \in \{-1,0,1\}.
\]
We showed in~\cite{MR} that whenever the series
\begin{equation}
\label{eq:oldformula}
  \sum_{\alpha \in \mathcal{W}} \epsilon_X(\alpha) w_\mu(\alpha) 
\end{equation}
converges absolutely, then its sum is equal to $v_\mu$. However there was no clear condition on~$\mu$ for the series~\eqref{eq:oldformula} to converge. We only managed to prove its convergence for probability distributions with light enough tails, such as geometric distributions with parameter $p>1/2$. By contrast, the new formula~\eqref{eq:speedformula} is more tractable, as it only has positive terms and it holds for every non-degenerate probability distribution $\mu$. However, formula~\eqref{eq:oldformula} is still well-adapted for explicit estimates, as the computation of $\epsilon(\alpha)$ is linear in $|\alpha|$, while verifying that a word $\alpha$ belongs to $\mathcal{G}$ has a complexity which is exponential in the largest letter of $\alpha$.

Theorem~\ref{thm:main} is based on the construction of a two-sided stationary version of the infinite-bin model, i.e. a process for which time takes values in $\Z$ rather than in $\Z_+$. More precisely, we define
\[
  \Psi_r: X \in S \mapsto (X(r+j), j \in \Z) \in S,
\]
the shift operator on $S$, which shifts all the balls by $r$ units to the left. Then the following result holds.
\begin{theorem}
\label{thm:biinfinite}
Let $(\xi_n , n \in \Z)$ be a family of i.i.d. random variables with a non-degenerate distribution $\mu$, we set $\mathcal{F}_n=\sigma(\xi_k, k \leq n)$. Almost surely, there exists a unique process $(Y_n, n \in \Z)$ on $S$ such that the following three conditions hold:
\begin{itemize}
 \item $F(Y_0)=0$ ;
 \item $\forall n \in \Z, \ Y_{n+1} = \Phi_{\xi_{n+1}}(Y_n)$ ;
 \item $\Psi_{F(Y_n)}(Y_n) \in \mathcal{F}_n$.
\end{itemize}
\end{theorem}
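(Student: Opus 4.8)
The plan is to obtain $(Y_n)$ by a coupling-from-the-past (Loynes-type) construction for the infinite-bin model viewed modulo translation. Let $\bar S$ be the quotient of $S$ by the shift action of $\Psi$; since each move is shift-equivariant, $\Phi_k(\Psi_r X)=\Psi_r\Phi_k(X)$, it descends to a map $\bar\Phi_k\colon\bar S\to\bar S$, and we metrize $\bar S$ so that two classes are close when their front-normalized representatives (the unique representative with front at $0$) agree on a long window near the front. Fix once and for all a deterministic reference configuration $z\in S$, and for $n\in\Z$ and $N\ge 0$ let $\bar Y^{(N)}_n\in\bar S$ denote the class of $\Phi_{\xi_{n-N+1}^{n}}(z)$, the configuration obtained from $z$ by applying the moves $\xi_{n-N+1},\dots,\xi_n$ in this order. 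The first step is to show that, almost surely, for every $n$ and every $K\ge 0$ the restriction of the front-normalized representative of $\bar Y^{(N)}_n$ to the bins $\{-K,\dots,0\}$ is eventually constant in $N$, with a limit independent of $z$ (and a genuine configuration, since every bin weakly to the left of the front is non-empty); write $\bar Y_n\in\bar S$ for the resulting limit. Applying $\bar\Phi_{\xi_{n+1}}$ to the approximations of $\bar Y_n$ and relabelling, using that a move depends continuously on the configuration near the front, gives $\bar Y_{n+1}=\bar\Phi_{\xi_{n+1}}(\bar Y_n)$, and $\bar Y_n$, being an almost-sure limit of classes that are deterministic functions of $\xi_{n-N+1},\dots,\xi_n$, is $\mathcal F_n$-measurable. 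One then lifts: take $Y_0$ to be the representative of $\bar Y_0$ with $F(Y_0)=0$, set $Y_n=\Phi_{\xi_n}\circ\cdots\circ\Phi_{\xi_1}(Y_0)$ for $n>0$, and for $n<0$ define $Y_n$ going downward as the unique representative of $\bar Y_n$ with $\Phi_{\xi_{n+1}}(Y_n)=Y_{n+1}$, which is unique because a configuration has trivial shift-stabilizer. Conditions (1)–(3) then hold by construction.

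Everything reduces to one renovation statement: almost surely there exist infinitely many integers $n_0\le 0$, together with blocks of bins whose size tends to $+\infty$ as $n_0\to-\infty$, such that for \emph{every} configuration $X\in S$ the configuration $\Phi_{\xi_{n_0}^{0}}(X)$ renormalized to have its front at $0$ agrees on the associated block with a deterministic function of $(\xi_{n_0},\dots,\xi_0)$ alone. Granting this, the eventual stabilization of $\bar Y^{(N)}_0$ in $N$ and its independence from $z$ are immediate (for window size $K$, use an $n_0$ whose block has size $\ge K$), and the same holds at every time $n$ by the stationarity of $(\xi_k)$. That such ``coupling patterns'' occur infinitely often to the left of $0$, with unboundedly long repetitions, then follows from Borel–Cantelli, since $(\xi_k)$ is i.i.d. and $\mu$ is non-degenerate, so every fixed finite word with positive $w_\mu$-weight appears infinitely often among $(\xi_k,\ k\le 0)$.

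The main obstacle is producing the coupling patterns for an arbitrary non-degenerate $\mu$. When $1\in\operatorname{supp}(\mu)$ this is transparent: applying $\Phi_1$ exactly $\ell$ times to any configuration moves the front $\ell$ units to the right and puts exactly one ball in each of the $\ell$ newly created bins, so the word $(1,\dots,1)$ of length $\ell$ is a coupling pattern yielding agreement on a window of size $\ell$ just behind the front, and arbitrarily long runs of $1$'s occur in the past. For a general $\mu$ the smallest element $m=\min\operatorname{supp}(\mu)$ may be $\ge 2$, and the role of the runs of $1$'s must be taken over by suitable \emph{good words} — words along which the front advances for every configuration — together with the fact, noted after Theorem~\ref{thm:main}, that almost surely some $\xi_{-n}^{0}$ is a good or a bad word. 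One must show that reading a long enough good pattern both advances the front by a growing amount and overwrites a growing near-front block into a state that no longer depends on the earlier configuration; making this uniform in the starting configuration is the delicate coupling input, and is where Proposition~\ref{prop:coupling} enters.

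Uniqueness follows from the same renovation statement. Suppose $(Y_n)$ and $(Y'_n)$ both satisfy (1)–(3). By (2) the classes of $Y_n$ and of $Y'_n$ in $\bar S$ both satisfy the renormalized recursion, hence, for any coupling instant $n_0\le n$ with block of size $\ge K$ (which exists by the renovation statement applied with time origin $n$), the restrictions to $\{-K,\dots,0\}$ of the front-normalized representatives of both classes equal the same deterministic function of $(\xi_{n_0},\dots,\xi_n)$; since $K$ is arbitrary, both classes equal $\bar Y_n$, and by (3) each of $\Psi_{F(Y_n)}(Y_n)$ and $\Psi_{F(Y'_n)}(Y'_n)$ is the front-normalized representative of this common class. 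Finally $F(Y_0)=0=F(Y'_0)$ and the common increments $F(Y_{n+1})-F(Y_n)$, which are functions of $\bar Y_n$ and $\xi_{n+1}$, give $F(Y_n)=F(Y'_n)$ for all $n$, hence $Y_n=Y'_n$ for all $n\in\Z$.
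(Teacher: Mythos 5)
Your construction is essentially the paper's: both reduce the theorem to the almost-sure finiteness of the coupling times $\tau_K$ from Proposition~\ref{prop:coupling}, and your coupling-from-the-past formulation on the quotient $\bar{S}$ is a repackaging of the paper's direct definition of $\Pi_K(Y_0)$ as the single element of $\Pi_K\circ\Phi_{\xi_{-\tau_K}^0}(S)$, with the lifting, measurability and uniqueness steps matching. One caveat: your Borel--Cantelli sentence does not by itself yield the renovation statement, because a $K$-coupling word occurring as a factor ending at some $n_1<0$ need not make the whole word $\xi_{\cdot}^{0}$ a $K$-coupling word --- subsequent large letters can destroy coupling (e.g. $\frakC(2,3,2,2)=1$ while $\frakC(2,3,2,2,5)=0$) --- and this is precisely the difficulty resolved by the random-walk argument in the proof of Proposition~\ref{prop:coupling}, so your explicit appeal to that proposition is what actually carries the step.
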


Note that $\Psi_{F(Y)}(Y)$ is the configuration $Y$ translated such that the front is at position $0$. We call the process $(Y_n)$ a stationary version of the infinite-bin model as the process $(\Psi_{F(Y_n)}(Y_n), n \in \Z)$ is a stationary Markov process. In other words, $(Y_n)$ depicts a wave of balls moving from left to right, such that the law of the wave considered up to translation is stationary.

In \cite{FK}, Foss and Konstantopoulos proved the existence of a two-sided stationary version of the infinite-bin model in the case when $\mu$ has finite expectation (the general framework of extended renovation theory which they developed actually also encompasses some cases of light-tailed $\mu$ with infinite expectation). They showed in that case that if one samples an infinite-bin model $(X_n)_{n\geq0}$ and a two-sided process $(Y_n)_{n\in\Z}$ using the same sequence $(\xi_n)_{n\in\Z}$, then $(X_n)$ coupling-converges to $(Y_n)$ which entails the joint convergence of the number of balls in bins within a fixed finite distance from the front. Their construction was based on going back in time and searching for certain renovation events, which determine where all the balls are placed after the renovation event starts, regardless of what the configuration was before the start of the event. These renovation events have positive probability when $\mu$ has finite expectation, but have probability zero otherwise. The renovation events they considered correspond to suffixes of $(\xi_n)_{n\in\Z}$ which are infinite on the right and finite on the left, such that the $i$-th letter is at most equal to $i$.

In order to construct $(Y_n)_{n\in\Z}$ even when $\mu$ has infinite expectation, we consider another class of words, described in Section~\ref{sec:coupling}, which have positive probability even when $\mu$ has infinite expectation. Namely we observe that there exist almost surely finite suffixes of $(\xi_n)_{n\in\Z_{\leq0}}$ which determine the content of a finite number of bins at the front at time $0$, regardless of what the configuration was before the appearance of that suffix. This observation makes it possible to do perfect simulation from the stationary measure of any infinite-bin model, in the spirit of what has been done for other processes with long memory \cite{CFF,FK,PW}.

One of the main reasons for the study of infinite-bin models is the connection with Barak-Erd\H{o}s graphs, which holds only when $\mu$ is a geometric distribution. For $p \in [0,1]$, we denote by $\mu_p$ the geometric distribution of parameter $p$. Foss and Konstantopoulos \cite{FK} introduced a coupling between the infinite-bin model with moves distributed like $\mu_p$ and Barak-Erd\H{o}s graph of parameter $p$ by observing that, as one grows a Barak-Erd\H{o}s graph by adding vertices one by one, recording the length of the longest path ending at each vertex produces a process distributed like the IBM($\mu_p$), see also \cite[Section 5]{MR} for more details. The Foss-Konstantopoulos coupling implies in particular that
\begin{equation}
\label{eq:FKcoupling}
\forall p \in [0,1], \quad v_{\mu_p}=C(p).
\end{equation}
As a consequence of Theorem~\ref{thm:main}, we immediately deduce the following formula for the growth rate $C(p)$ of the length of the longest path in Barak-Erd\H{o}s graphs with edge probability $p$:
\begin{corollary}
\label{cor:Cpformula}
For every $0 \leq p \leq 1$,
\begin{equation}
C(p)=\sum_{\alpha \in \mathcal{G}_m} p^{|\alpha|} (1-p)^{\sum_{j=1}^{|\alpha|} (\alpha_j - 1)}.
\end{equation}
\end{corollary}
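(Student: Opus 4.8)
The plan is to obtain the corollary as a direct consequence of Theorem~\ref{thm:main} and the Foss--Konstantopoulos identity \eqref{eq:FKcoupling}, the only genuine work being an explicit evaluation of the weights $w_{\mu_p}$ together with a separate treatment of the two boundary values $p\in\{0,1\}$, at which Theorem~\ref{thm:main} does not apply because $\mu_p$ is then degenerate.

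Fix first $p\in(0,1)$. The geometric distribution $\mu_p$ has support all of $\N$, hence is non-degenerate, so Theorem~\ref{thm:main} gives $v_{\mu_p}=\sum_{\alpha\in\mathcal{G}_m}w_{\mu_p}(\alpha)$, while \eqref{eq:FKcoupling} identifies the left-hand side with $C(p)$. It then remains to compute, for any word $\alpha$,
\[
  w_{\mu_p}(\alpha)=\prod_{j=1}^{|\alpha|}\mu_p(\alpha_j)=\prod_{j=1}^{|\alpha|}p(1-p)^{\alpha_j-1}=p^{|\alpha|}(1-p)^{\sum_{j=1}^{|\alpha|}(\alpha_j-1)},
\]
which is exactly the summand appearing in the statement; this settles the case $0<p<1$.

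For the endpoints I would note that each term $p^{|\alpha|}(1-p)^{\sum_j(\alpha_j-1)}$ is a polynomial in $p$, so the series can be evaluated termwise at $p=0$ and $p=1$. Since every $\alpha\in\mathcal{G}_m$ is nonempty, at $p=0$ the factor $p^{|\alpha|}$ annihilates every term and the right-hand side equals $0=C(0)$, consistent with $G_{n,0}$ having no edges. At $p=1$ the factor $(1-p)^{\sum_j(\alpha_j-1)}$ vanishes unless all letters of $\alpha$ equal $1$; the only such minimal good word is $(1)$, since $(1,1)$ has the good proper suffix $(1)$ and is thus not minimal, so the right-hand side reduces to the single term $1=v_{\delta_1}=C(1)$, using \eqref{eq:FKcoupling} and the deterministic value $v_{\delta_1}=1$. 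Combining the three cases proves the identity on all of $[0,1]$.

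I expect no real obstacle here: all the substance is contained in Theorem~\ref{thm:main}, and the only point requiring a little care is that the corollary asserts the formula on the closed interval while Theorem~\ref{thm:main} needs $\mu$ non-degenerate, which is what forces the elementary boundary computation above (one could alternatively pass to the limit using continuity of $C$ and of the series, but the termwise evaluation is cleaner).
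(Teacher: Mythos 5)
Your proof is correct and follows exactly the route the paper intends: substitute the geometric weights $w_{\mu_p}(\alpha)=p^{|\alpha|}(1-p)^{\sum_j(\alpha_j-1)}$ into Theorem~\ref{thm:main} and invoke \eqref{eq:FKcoupling}, which is why the paper presents the corollary as an immediate consequence without a separate proof. Your explicit check of the degenerate endpoints $p=0$ and $p=1$ (where only the word $(1)$ survives) is a small but correct addition of care that the paper leaves implicit.
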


Another special case of the infinite-bin model can be coupled with a known stochastic process. As observed in~\cite{MR}, the speed $w_k$ of an infinite-bin model with measure $\nu_k$ uniform on $\{1,\ldots, k\}$ is the same (up to a factor $k$) as the speed of a continuous-time branching random walk on $\Z$ with selection of the rightmost $k$ individuals, with a specific reproduction law. In the case of a general branching random walk, each individual reproduces after an exponential time, whereby it gives birth to a random number of children placed at random locations around the parent and the parent dies immediately after. The randomness is governed by the \emph{reproduction law}. The reproduction law corresponding to the infinite-bin model is the one where each parent has exactly two children, one placed one unit to its right and one placed at the same position as the parent (which serves to replace the dead parent). The particular infinite-bin model with uniform distribution was first studied by Aldous and Pitman in~\cite{AP}, who proved that $\lim_{k \to \infty} kw_k = e$. Denoting by $\mathcal{G}_{m,k}$ the set of minimal good words using letters only between $1$ and $k$, formula~\eqref{eq:speedformula} yields
\begin{equation}
\label{eq:branchingrwspeed}
  w_k = \sum_{\alpha \in \mathcal{G}_{m,k}} \frac{1}{k^{|\alpha|}}.
\end{equation}

\begin{remark}[Speed of a branching random walk with selection]
\label{rem:speed}
The asymptotic behavior of the speed of a branching random walk with selection with a general reproduction law was conjectured in Brunet and Derrida~\cite{BD}. In the special case of the infinite-bin model with uniform distribution, this conjecture can be stated as
\begin{equation}
  \label{eqn:conjectureBD}
  kw_k = e -\frac{e\pi^2}{2(\log k+3\log \log k + o(\log \log k))^2} \quad \text{ as } k \to \infty.
\end{equation}
So far, two terms of the asymptotic behaviour of the speed of branching random walks with selection have been obtained by B\'erard and Gou\'er\'e \cite{BeG10} for binary reproduction laws and extended in \cite{Mal15a} to more general reproduction laws. In the special case of the infinite-bin model with uniform distribution, these results imply that
\[
 k w_k = e - \frac{e \pi^2}{2(\log k)^2}(1+ o(1)) \quad \text{as } k \to \infty,
\]
see \cite[Lemma 7.1]{MR}. It would be interesting to prove the Brunet-Derrida conjecture with the additional term in $\log \log k$ in the special case of the infinite-bin model with uniform distribution using formula \eqref{eq:branchingrwspeed}, or to use the conjectured formula \eqref{eqn:conjectureBD} to gain information on the distribution of good words, by performing an analysis of singularities.
\end{remark}

One may extend the above connection between branching random walks and infinite-bin models to infinite-bin models with general move distribution $\mu$, by seeing them as some rank-biased branching random walks, in which the $k$th rightmost particle reproduces at each time step with probability $\mu(k)$, by giving birth to a new child at distance $1$ to its right. This is perhaps more striking when considering the infinite-bin model in continuous time, such that each new ball appears after an exponential random time of parameter $1$. Then the branching random walk can be described as follows: each particle reproduces independently by making a new child at distance $1$ to its right at rate $\mu(k)$ if the particle is the $k$th rightmost particle. Therefore, the rate at which particles reproduce depends on their rank, which induces a correlation between the particles.

\paragraph*{Outline of the paper} In the next section, we prove Theorem~\ref{thm:biinfinite} as well as a coupling-convergence result. In Section~\ref{sec:speed}, we prove Theorem \ref{thm:main} by linking the speed of the classical infinite-bin model to the one of the two-sided process. Finally, we prove Theorem~\ref{thm:analyticity} in Section~\ref{sec:analyticity} by showing that the length of the smallest good word in the past of the two-sided process has an exponential tail.

\section{Coupling words for the two-sided process}
\label{sec:coupling}

In this section we fix $\mu$ to be a probability distribution on $\N$. The proof of Theorem \ref{thm:biinfinite} is based on the existence of so-called coupling words, introduced by Chernysh and Ramassamy~\cite{CR} for the IBM($\mu$). More precisely, for every $K \in \Z_+$ we introduce the projection
\[
  \Pi_K: \begin{array}{rcl} S & \longrightarrow & \N^K\\ X & \longmapsto &\left( X\left(F(X)-K+1\right), \ldots, X\left(F(X) \right) \right) \end{array}
\]
which associates to a configuration $X$ its $K$\emph{-scenery seen from the front}, i.e. the number of balls in each of the rightmost $K$ non-empty bins. By convention, if $K=0$, the target of $\Pi_K$ is the singleton composed of the empty sequence. The \emph{coupling number} $\frakC(\gamma)$ of a word $\gamma\in\mathcal{W}$ is defined to be the largest integer $K\geq0$ such that after applying the moves in $\gamma$, the $K$-scenery seen from the front is independent of the starting configuration. More precisely,
\begin{equation}
\label{eq:defcouplingnumber}
  \frakC(\gamma)= \max\left\{ K \geq 0: \Pi_K(\Phi_\gamma(X)) = \Pi_K(\Phi_\gamma(Y)) \text{ for all } X, Y \in S\right\}.
\end{equation}
Since the set on the right-hand side of~\eqref{eq:defcouplingnumber} is an interval containing $0$, for all $0 \leq k \leq \frakC(\gamma)$, the image of the function $ \Pi_k \circ \Phi_\gamma$ is a singleton, i.e. the $k$-scenery seen from the front after applying the moves in $\gamma$ does not depend on the starting configuration.

For example, we have $\frakC(2,3,2,2)=1$, as one can check by distinguishing according to the two possible relative positions of the rightmost two balls in an arbitrary initial configuration. A word $\gamma$ is called $K$\emph{-coupling} if $\frakC(\gamma)\geq K$. If $\gamma$ is a $K$-coupling word, then any word which has $\gamma$ as a suffix is also a $K$-coupling word. Note however that a word having a $K$-coupling word $\gamma$ as a prefix may not be $K$-coupling. For example, we have $\frakC(2,3,2,2)=1$ and $\frakC(2,3,2,2,5)=0$. Nevertheless, we can control the variation of $\frakC$ when adding a suffix. If $\gamma\in\mathcal{W}$ and $a\in\N$, we recall that $\gamma\cdot a$ is the word of length $|\gamma|+1$ obtained by adding the letter $a$ to the end of $\gamma$.

\begin{lemma}
\label{lem:Cvariation}
Let $\gamma\in\mathcal{W}$ and $a\in\N$. Then $\frakC(\gamma\cdot a)\geq \frakC(\gamma)-1$. Furthermore if $a\leq \frakC(\gamma)$ then $\frakC(\gamma\cdot a) \geq \frakC(\gamma)$.
\end{lemma}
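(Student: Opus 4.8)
The plan is to track what happens to the $K$-scenery seen from the front when one additional move of type $a$ is appended. Write $\gamma$ for the given word, $K=\frakC(\gamma)$, and for any configuration $X$ set $Z = \Phi_\gamma(X)$; by definition of $\frakC$, the values $Z(F(Z)-K+1),\dots,Z(F(Z))$ do not depend on $X$. Applying the move $\Phi_a$ adds one ball immediately to the right of the bin containing the $a$th rightmost ball of $Z$. The key observation is that the location $B(Z,a)+1$ where this new ball lands is determined by the contents of the rightmost $a$ non-empty bins of $Z$ (more precisely by the partial sums $N(Z,\cdot)$ read from the front, which only depend on the $K$-scenery when $a\le K$), together, in general, with the mere fact that bins further to the left are non-empty — which holds for every configuration in $S$.

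\emph{First part ($\frakC(\gamma\cdot a)\ge \frakC(\gamma)-1$).} I would argue that, regardless of where the new ball lands, the contents of the rightmost $K-1$ non-empty bins of $\Phi_a(Z)$ are determined by the rightmost $K$ non-empty bins of $Z$. Indeed, either the new ball creates a new front bin (one unit to the right of the old front), in which case the new $(K-1)$-scenery is $(Z(F(Z)-K+2),\dots,Z(F(Z)),1)$; or it falls into an already non-empty bin at position $\ge F(Z)-K+2$ within the window, in which case only a bin inside the determined window changes, by $+1$; or it falls at position $\le F(Z)-K+1$, i.e. outside the rightmost $K-1$ bins, in which case the $(K-1)$-scenery of $\Phi_a(Z)$ equals that of $Z$. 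In every case the new $(K-1)$-scenery is a function of the old $K$-scenery alone, hence independent of $X$, giving $\frakC(\gamma\cdot a)\ge K-1$.

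\emph{Second part ($a\le \frakC(\gamma)$ implies $\frakC(\gamma\cdot a)\ge \frakC(\gamma)$).} When $a\le K$, the bin $B(Z,a)$ containing the $a$th rightmost ball lies among the rightmost $K$ non-empty bins of $Z$, since the partial sums $N(Z,F(Z)),N(Z,F(Z)-1),\dots$ are determined by the $K$-scenery and already reach $a$ within that window; in particular $B(Z,a)+1 > F(Z)-K+1$, so the new ball lands strictly inside (or just at the right end of) the determined window. Consequently the full rightmost $K$ non-empty bins of $\Phi_a(Z)$ — namely $(Z(F(Z)-K+1),\dots,Z(F(Z)))$ with a $+1$ added in the correct determined slot, or shifted by one with a trailing $1$ if a new front is created — form a function of the old $K$-scenery, so $\frakC(\gamma\cdot a)\ge K$.

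\emph{Main obstacle.} The routine but slightly delicate point is the bookkeeping of the front: appending a move can advance the front by one, which shifts the window $\{F-K+1,\dots,F\}$, so one must check that the ``new'' leftmost bin of the shifted window was already inside the old determined window (it was: it is the bin $F(Z)-K+2$), and that no information about the undetermined region leaks in. I would handle this by carefully casing on whether $B(Z,a)+1$ equals $F(Z)+1$ (front advances) or not, and in the first part additionally on whether $B(Z,a)+1\ge F(Z)-K+2$ or $\le F(Z)-K+1$; in all cases the argument reduces to the elementary fact that $\Phi_a$ acts locally near the front and that all configurations in $S$ agree qualitatively (all non-empty) to the left of the front.
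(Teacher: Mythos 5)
Your proof is correct and follows essentially the same route as the paper's: both arguments reduce to whether the $a$th rightmost ball lies inside the determined window of the rightmost $\frakC(\gamma)$ non-empty bins (the paper phrases this as comparing $a$ with the determined total number $M\geq\frakC(\gamma)$ of balls in that window, which is exactly your dichotomy between landing at position $\geq F(Z)-K+2$ and landing at position $\leq F(Z)-K+1$). The only point worth making fully explicit is that \emph{which} of your cases occurs is itself a function of the $K$-scenery (via $M$), so the case split does not leak information from the undetermined part of the configuration; your ``key observation'' already contains this.
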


\begin{proof}
Denote by $M$ the number of balls in the single finite configuration in the image of $\Pi_{\frakC(\gamma)} \circ \Phi_\gamma$. The constant $M$ depends on the word $\gamma$, but not on the starting configuration. We distinguish two cases, whether $a \leq M$ or $a > M$.

We first assume $a \leq M$. In that case, after executing the moves corresponding to the letters of $\gamma$, the execution of $a$ selects a ball in the $K$th rightmost bin with $K\leq \frakC(\gamma)$, and places a ball in the bin immediately to the right of that bin. In particular, $\Pi_{\frakC(\gamma)} \circ \Phi_{\gamma\cdot a}(S)$ is still a singleton.

We now assume that $a>M$. Then, the execution of $a$ selects a ball in the $K$th rightmost bin with $K> \frakC(\gamma)$ ($K$ may depend on the initial configuration before the execution of $\gamma$) and places a ball in the bin immediately to the right of that bin. Note that while it might modify the content of the $\frakC(\gamma)$th rightmost bin, it does not change the content of any of the rightmost $\frakC(\gamma)-1$ bins. Thus $\Pi_{\frakC(\gamma)-1} \circ \Phi_{\gamma\cdot a}(S)$ is a singleton.

This proves that in any case, $\frakC(\gamma\cdot a) \geq \frakC(\gamma)-1$. Moreover, as there is necessarily at least one ball in each of the $\frakC(\gamma)$ rightmost bins, we know that $M \geq \frakC(\gamma)$. Therefore, if $a \leq \frakC(\gamma) \leq M$, then  $\frakC(\gamma\cdot a) \geq \frakC(\gamma)$.
\end{proof}

The following result will be the key for constructing a two-sided stationary version of the IBM and computing its speed.
\begin{proposition}
\label{prop:coupling}
Let $(\xi_n, n \in \Z)$ be a family of independent random variables with law $\mu$. For $K \in \N$, we set
\[
  \tau_K = \inf\{ n \geq 0: \xi_{-n}^0 \text{ is a } K\text{-coupling word} \}.
\]
Then $\tau_K$ is finite a.s.
\end{proposition}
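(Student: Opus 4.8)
The plan is to exhibit, for each fixed $K$, an explicit finite word $\gamma^{(K)}$ that is $K$-coupling, and then argue that such a word appears infinitely often (hence at least once just before time $0$) in the bi-infinite i.i.d. sequence. The natural candidate is a long constant run of $1$'s: I claim that $\gamma^{(K)} = (1,1,\ldots,1)$ of length $K$ is $K$-coupling, or more safely that a run of $1$'s of some length $\ell(K)$ depending only on $K$ does the job. Indeed, a move of type $1$ always adds a ball to the bin immediately to the right of the current front, so after one move of type $1$ the front advances by one and the new front bin contains exactly one ball, \emph{independently of the starting configuration}; applying Lemma~\ref{lem:Cvariation} inductively, since $1 \leq \frakC(\gamma)$ as soon as $\frakC(\gamma) \geq 1$, each additional letter $1$ increases (or at worst preserves) the coupling number, and a short bootstrap shows a run of $1$'s of length $\ell$ forces the rightmost $\ell$ bins seen from the front to be $(1,1,\ldots,1)$. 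Hence $\frakC(1^\ell) \geq \ell$, so $\gamma^{(K)} = 1^K$ is $K$-coupling.

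Next, since $\mu$ is a probability distribution on $\N$ we may fix $a$ in the support of $\mu$; in fact I want $a = 1$ to be attainable, but if $\mu(1) = 0$ I instead take the smallest element $m$ of the support and observe that a run of $m$'s of appropriate length is still $K$-coupling (a move of type $m$ with $m$ below the current ball count behaves like a small-index move), or more cleanly one checks that for \emph{any} fixed letter $b$ in the support, a sufficiently long run $b^{\ell}$ has coupling number $\geq K$ — this follows again from Lemma~\ref{lem:Cvariation} together with the observation that once $\frakC(\gamma) \geq b$, the bound $b \leq \frakC(\gamma)$ applies and the coupling number stops decreasing, so it can only grow, and an elementary analysis of what a type-$b$ move does pins down the scenery. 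Either way, we obtain a fixed word $\gamma^{(K)}$ of some fixed length $L = L(K)$, built from letters in the support of $\mu$, with $\frakC(\gamma^{(K)}) \geq K$ and $p := w_\mu(\gamma^{(K)}) = \prod_{j=1}^{L}\mu(\gamma^{(K)}_j) > 0$.

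Finally, I invoke a Borel–Cantelli / ergodicity argument on the i.i.d. sequence $(\xi_n, n \in \Z)$. Partition the negative integers into consecutive blocks of length $L$: $[-(i+1)L, -iL-1]$ for $i \geq 0$. The events $A_i = \{\xi_{-(i+1)L}^{-iL-1} = \gamma^{(K)}\}$ are independent and each has probability $p > 0$, so by the second Borel–Cantelli lemma almost surely some $A_i$ occurs; on that event the word $\xi_{-(i+1)L}^{0}$ has $\gamma^{(K)}$ as a suffix, and since any word having a $K$-coupling word as a suffix is itself $K$-coupling (as noted in the paragraph preceding Lemma~\ref{lem:Cvariation}), we conclude $\tau_K \leq (i+1)L - 1 < \infty$ almost surely. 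The main obstacle is the first step: carefully proving that a constant run is $K$-coupling requires tracking precisely how a move of type $b$ interacts with the rightmost $\frakC$ bins when $b$ exceeds the number of balls in those bins; Lemma~\ref{lem:Cvariation} does most of the work, but one must verify the base case and the fact that the coupling number actually reaches $K$ (and does not merely stay bounded), which is where a direct combinatorial description of $\Phi_{b^\ell}(X)$ near the front is needed.
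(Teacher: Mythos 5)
Your final step contains a fatal prefix/suffix confusion, and the point you gloss over is precisely the difficulty the actual proof is built to overcome. On the event $A_i=\{\xi_{-(i+1)L}^{-iL-1}=\gamma^{(K)}\}$ with $i\geq 1$, the word $\gamma^{(K)}$ occupies the \emph{initial} segment of $\xi_{-(i+1)L}^{0}$, i.e.\ it is a \emph{prefix}, not a suffix: the letters $\xi_{-iL},\dots,\xi_{0}$ are read \emph{after} it. The stability property you invoke goes the other way, and the paper explicitly warns that prefixes do not suffice ($\frakC(2,3,2,2)=1$ while $\frakC(2,3,2,2,5)=0$). By Lemma~\ref{lem:Cvariation} each subsequent letter may decrease the coupling number by one, so after the $iL$ further letters separating the occurrence of $\gamma^{(K)}$ from time $0$ (with $i$ random and unbounded), nothing prevents $\frakC(\xi_{-(i+1)L}^{0})$ from dropping to $0$. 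The one block that does end at time $0$ has probability $p<1$, so Borel--Cantelli gives nothing for it alone. Closing this gap is the core of the paper's argument: one introduces an auxiliary random walk that records $-1$ for each ``dangerous'' letter $\geq K$ and $+1$ for each occurrence of the front-advancing pattern $a^m$, and waits for $\gamma$ to appear immediately before a strictly descending ladder epoch of that walk; the ladder structure guarantees that between this occurrence of $\gamma$ and time $0$ the coupling number never falls below $K$. Your proposal omits this mechanism entirely.

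There is a second, smaller gap in your first step. When $\mu(1)>0$, the run of $1$'s is indeed $K$-coupling, as the paper itself notes. But when $1\notin\operatorname{supp}\mu$, your claim that a long constant run $b^{\ell}$ is $K$-coupling is false: for instance no power of the single letter $2$ is even $1$-coupling (starting from a front bin with one ball versus two balls, the $1$-sceneries remain distinct after any number of type-$2$ moves), consistent with the failure of formula~\eqref{eq:speedformula} for $\mu=\delta_k$, $k\geq 2$. One genuinely needs a word in two distinct letters of the support, which is where the paper invokes Theorem~1.1 of Chernysh--Ramassamy and where the non-degeneracy of $\mu$ enters.
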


\begin{proof}
Let $a$ be the smallest integer in the support of $\mu$. Setting $m = \frac{a(a-1)}{2}+1$, we denote by $a^m$ the word of length $m$ containing only letters $a$. We first show that applying $\Phi_{a^m}$ to any initial configuration has the effect of making the front advance by at least $1$. This can be observed using the partial order $\preccurlyeq$ on $S$ introduced in~\cite[Section 2]{MR}, which is such that for any $X \preccurlyeq X'$ in $S$, $F(X) \leq F(X')$ and for any word $\gamma \in \mathcal{W}$, $\Phi_{\gamma}(X) \preccurlyeq \Phi_{\gamma}(X')$. For any $n\in\Z$, the smallest configuration in $S$ with the front at position $n$ is $\underline{X}^{(n)}(k) = \ind{k \leq n}$ and one easily checks that applying $\Phi_{a^m}$ to $\underline{X}^{(n)}$ has the effect of making the front advance by $1$. Therefore, applying $\Phi_{a^m}$ to any configuration makes the front advance by at least $1$. We will need this observation towards the end of the proof.

Note that for any $K < K'$, a $K'$-coupling word is also a $K$-coupling word, hence $\tau_K \leq \tau_{K'}$. Therefore we can without loss of generality choose the integer $K$ as large as we wish in this proof. We introduce the following sequence of waiting times (backward in time) defined by $T_0=0$ and
\[
  T_{k+1} = \sup \left\{ n < T_k: \xi_n \geq K \quad \text{or} \quad (n+ m-1 < T_k \text{ and } \xi_n^{n+m-1}=a^m) \right\}.
\]
We now choose $K$ large enough such that
\begin{equation}
  \label{eqn:assumptionK}
  \P(\xi_{T_{1}} \geq K) < 1/3
\end{equation}
i.e. such that it is at least twice less likely to observe a letter larger than $K$ than to observe $m$ occurrences of $a$ in a row when observing the sequence $(\xi_n)$. Indeed, one can check that
\[
  \P(\xi_{T_1} \geq K) = \frac{\mu([K,\infty)) ( 1 - \mu(a)^{m+1}  )}{\mu(([K,\infty)) + (\mu([K,\infty)) + \mu(a)) \mu(a)^{m+1}},
\]
which can be made as small as wished as $K \to \infty$, hence for $K$ large enough, assumption \eqref{eqn:assumptionK} can be verified.

We denote by $S_0=0$ and $S_{k+1} = S_k + 2 \ind{\xi_{T_{k+1}}\geq K} - 1$ an associated random walk. For all $k \geq 0$, $T_{k+1}$ is the first time before $T_k$ where we see either a letter larger than $K$ or the pattern $a^m$ and $S_k$ counts the difference between the number of times the former versus the latter occurs. By assumption \eqref{eqn:assumptionK}, we have $\E(S_1) < -1/3$, thus $(S_k)$ drifts towards $-\infty$. As a result, we know there exists an infinite sequence of times $(R_k, k \geq 0)$ defined by $R_0=0$ and $R_{k+1} = \inf\{ n > R_k: S_n < S_{R_k} \}$, the time at which $S_n$ reaches its record minimum for the $(k+1)$st time. 

Let $b$ be the second largest integer in the support of $\mu$ (here we use the fact that $\mu$ is non-degenerate). Theorem 1.1 in \cite{CR} says exactly that for any $N\geq1$ there exists a word $\gamma'_N$ using only letters $a$ and $b$ such that after applying the moves in $\gamma'_N$, the position of the rightmost $N$ balls relatively to the front is independent of the starting configuration. A straightforward reformulation of that result is that for any $N\geq1$ there exists a word $\gamma_N$ using only letters $a$ and $b$ such that after applying the moves in $\gamma_N$, the content of the rightmost $N$ nonempty bins is independent of the starting configuration. Hence, there exists a $(K+1)$-coupling word $\gamma$ that is written only using the letters $a$ and $b$. Note that if $a=1$ (i.e. $\mu(1) \neq 0$), the word $\gamma$ can be chosen to be the word obtained by repeating $K+1$ times the letter $1$. We define the new waiting time
\[
  L= \inf\left\{ k \in \N: \xi_{T_{R_k} - |\gamma|}^{T_{R_k}-1} = \gamma\right\},
\]
i.e. the first time that the word $\gamma$ appears immediately before a time at which the random walk $S$ hits a new minimum. As the appearance of the word $\gamma$ immediately before time $T_{R_k}$ has positive probability of occurring and is independent of everything that happens after time $T_{R_k}$, we observe that $L<\infty$ a.s. 

Set $N= -T_{R_L}+|\gamma|$. To conclude the proof, it is enough to show that $\xi_{-N}^{0}$ is a $K$-coupling word, which will prove that $\tau_K \leq N < \infty$ a.s. To do so, we prove that for any $ T_{R_L}-1 \leq n < 0$, we have $\frakC(\xi_{-N}^n)\geq K+1$. It is true for $n= T_{R_L}-1$, since $\xi_{-N}^{T_{R_L}-1}=\gamma$ which is a $(K+1)$-coupling word. For any $k\geq1$, define
\[
T'_k=
\begin{cases}
T_k &\text{ if } \xi_{T_k}\geq K \\
T_k+m-1 &\text{ otherwise.}
\end{cases}
\]
When reading the word $\xi_{-N}^0$ from left to right, the time $T'_k$ is the $(R_L +1 - k)$th time that we read either a letter larger or equal to $K$ or the rightmost letter of a pattern $a^m$. We also set $T'_0=0$.
One shows by induction on $0 \leq k < R_L$ that for any $T'_{k+1} \leq n < T'_k$, we have
\[
\frakC\left(\xi_{-N}^n\right) \geq \frakC\left(\xi_{-N}^{T_{R_L}-1}\right) + S_{k+1} - S_{R_L}.
\]
This is a consequence of Lemma~\ref{lem:Cvariation} and the facts that every letter at least $K$ decreases the coupling number by at most one, every pattern $a^m$ increases the coupling number by at least one (by the observation made at the beginning of the proof) and all the other patterns do not decrease the coupling number, since this coupling number stays above $K$. We conclude from the fact that $S_{k+1}\geq S_{R_L}$ if $0 \leq k < R_L$, as $R_L$ is a time when the random walk $S$ hits its minimum.
\end{proof}

Using the a.s. existence of finite times $\tau_K$ for $K \in \N$, we deduce Theorem~\ref{thm:biinfinite}.

\begin{proof}[Proof of Theorem~\ref{thm:biinfinite}]
We construct the configuration $Y_0$ as follows. For each $K \in \N$, we set the rightmost $K$ non-empty bins of $Y_0$ to be the single configuration in $\Pi_K \circ \Phi_{\xi_{-\tau_K}^0}(S)$, which is a.s. well-defined as $\tau_K < \infty$ a.s.

This construction is consistent for different values of $K$ and it produces a unique configuration $Y_0$ by sending $K$ to infinity and requiring that $F(Y_0)=0$. The variable $\Pi_K(Y_0)$ is measurable with respect to $\xi^0_{-\tau_K}$, so $Y_0 \in \mathcal{F}_0$ a.s. As a result, for any $n > 0$, $Y_n = \Phi_{\xi_1^n}(Y_0)$ is a.s. $\mathcal{F}_n$-measurable.

If $n < 0$, we can do a similar analysis as the one made for $n=0$. For any $K \in \N$, $\Pi_K(Y_n) \in \mathcal{F}_n$ a.s. Choosing $K>-n$ and using Lemma~\ref{lem:Cvariation}, one can deduce $F(Y_n)$ from $\Pi_K(Y_n)$ and from $\xi_n^0$, since we know that $F(Y_0)=0$. We conclude that the configuration $Y_n$ is a.s. entirely determined (up to a shift) by the sequence $(\xi_k, k\leq n)$.
\end{proof}

For any $k \in \Z$, the law of any $K$-scenery seen from the front of $Y_k$ depends only on $(\xi_n, n \leq k)$, which has the same law as $(\xi_n, n \leq 0)$. Hence the law of $\Psi_{F(Y_k)}(Y_k)$, which is the configuration $Y_k$ shifted to place its front at position $0$, is indeed the same as the law of $Y_0=\Psi_{F(Y_0)}(Y_0)$, as claimed in the paragraph right after the statement of Theorem~\ref{thm:biinfinite}.

Now that we have constructed the two-sided process $Y$, we observe using similar methods as in Proposition~\ref{prop:coupling}, that any infinite-bin model $X$ ends up behaving like this two-sided process.
\begin{proposition}[Coupling-convergence]
\label{prop:couplingconvergence}
Let $(\xi_n, n \in \Z)$ be i.i.d. random variables with law $\mu$ and fix $X_0 \in S$. We denote by $(Y_n)_{n \in \Z}$ the two-sided process defined in Theorem \ref{thm:biinfinite}, and by $(X_n)_{n\geq0}$ the infinite-bin model constructed with $X_0$ and the random variables $(\xi_n, n \geq 1)$. For any $K \in \N$, for all $n \geq 0$ large enough, we have
\[
  \Pi_K(X_n) = \Pi_K(Y_n) \quad \text{a.s.}
\]
\end{proposition}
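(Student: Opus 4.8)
\emph{Plan and reduction.} The idea is to reduce the statement to the almost-sure fact that $\xi_1^n$ eventually is, and stays, a $K$-coupling word, and then to prove that by tracking the coupling number of $\xi_1^n$ with the tools of Proposition~\ref{prop:coupling}. For the reduction, iterating the recursions $X_{n+1}=\Phi_{\xi_{n+1}}(X_n)$ and $Y_{n+1}=\Phi_{\xi_{n+1}}(Y_n)$ from time $0$ gives $X_n=\Phi_{\xi_1^n}(X_0)$ and $Y_n=\Phi_{\xi_1^n}(Y_0)$ for every $n\geq1$. By the definition~\eqref{eq:defcouplingnumber} of the coupling number, whenever $\xi_1^n$ is a $K$-coupling word the map $\Pi_K\circ\Phi_{\xi_1^n}$ is constant on $S$, and therefore
\[
\Pi_K(X_n)=\Pi_K(\Phi_{\xi_1^n}(X_0))=\Pi_K(\Phi_{\xi_1^n}(Y_0))=\Pi_K(Y_n).
\]
So it suffices to prove that, almost surely, $\xi_1^n$ is a $K$-coupling word for all $n$ large enough; I would in fact prove the stronger statement $\frakC(\xi_1^n)\to\infty$ almost surely.

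\emph{Tracking the coupling number.} Here I would re-run the argument of Proposition~\ref{prop:coupling}, reading the i.i.d.\ sequence $(\xi_n)_{n\geq1}$ forward instead of backward. Let $a<b$ be the two smallest elements of the support of $\mu$ and $m=\tfrac{a(a-1)}{2}+1$; recall that $\Phi_{a^m}$ moves the front forward by at least $1$ from any configuration, hence that reading a block $a^m$ raises the coupling number by at least $1$, while by Lemma~\ref{lem:Cvariation} any single letter lowers it by at most $1$ and a letter not exceeding the current coupling number does not lower it. Fix a large $K'\geq K+1$ and, using Theorem~1.1 of~\cite{CR}, a $(K'+1)$-coupling word $\gamma$ written with the letters $a$ and $b$. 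Define the forward waiting times $0=T_0<T_1<\cdots$, where $T_{k+1}$ is the first time after $T_k$ at which one reads a letter $\geq K'$ or completes a block $a^m$, together with the random walk $S_k=\sum_{i=1}^k\bigl(2\,\ind{\xi_{T_i}\geq K'}-1\bigr)$; choosing $K'$ large enough that the analogue of~\eqref{eqn:assumptionK} holds, $(S_k)$ has negative drift and $S_k\to-\infty$ almost surely. With the same bookkeeping as in Proposition~\ref{prop:coupling} (the times $T_k$, $T'_k$ and the walk $S$), one then gets: if $\gamma$ occurs in $(\xi_n)_{n\geq1}$ ending at some time $n_0$, then $\frakC(\xi_1^{n_0})\geq K'+1$ (a word with a $(K'+1)$-coupling suffix is $(K'+1)$-coupling), and
\[
\frakC(\xi_1^n)\ \geq\ (K'+1)-\bigl(S_{k(n)}-S_{k(n_0)}\bigr)\qquad\text{for all }n\geq n_0,
\]
where $k(n)=\max\{k:T_k\leq n\}$, this estimate remaining valid (the non-large letters keep not decreasing the coupling number) as long as its right-hand side stays $\geq K'-1$.

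\emph{From ``infinitely often'' to ``eventually''.} The word $\gamma$ occurs infinitely often, so it suffices to exhibit one occurrence, ending at some $n_0$, after which $S$ never rises more than $2$ above $S_{k(n_0)}$: for such an $n_0$ the displayed estimate holds for every $n\geq n_0$ and yields $\frakC(\xi_1^n)\geq K'-1\geq K$, and since $S_{k(n)}\to-\infty$ it yields $\frakC(\xi_1^n)\to\infty$. Now the event that $S$ never rises more than $2$ above its value at a given time has a fixed positive probability $q$ (because $S$ has negative drift and increments in $\{-1,+1\}$), and it depends only on the $\xi_j$ for $j$ beyond that time, hence is independent of the event that $\gamma$ ends there. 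One sets up the obvious regeneration — consider the first occurrence of $\gamma$; if the corresponding excursion of $S$ fails, wait for the first time the estimate could break and then for the next occurrence of $\gamma$, and iterate — obtaining a sequence of trials each succeeding with probability $q$ given the past; almost surely one of them succeeds, which proves $\frakC(\xi_1^n)\to\infty$ and hence the proposition.

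\emph{Main obstacle.} The delicate point is precisely this last step. Contrary to suffixes, prefixes do not inherit coupling (witness $\frakC(2,3,2,2)=1$ and $\frakC(2,3,2,2,5)=0$ in Section~\ref{sec:coupling}), so one cannot just wait for the first $K$-coupling window $\xi_1^n$ and stop: subsequent letters may destroy the coupling. What makes the argument work is that once $\frakC(\xi_1^n)$ is large, the letters that could decrease it are rare whereas the blocks $a^m$ that increase it keep arriving at a fixed rate, so it is transient to $+\infty$; marrying this drift to the recurrence of the favourable word $\gamma$ (exactly as in Proposition~\ref{prop:coupling}) is the core of the proof, and arranging the regeneration so that the ``success'' events are genuinely conditionally independent is the point that requires care.
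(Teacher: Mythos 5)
Your proof is correct and follows essentially the same route as the paper: read the letters forward in time, track $\frakC(\xi_1^n)$ via Lemma~\ref{lem:Cvariation} and the auxiliary walk $S$, and locate an occurrence of the coupling word $\gamma$ after which $S$ stays (essentially) below its current level, so that the coupling number never drops below $K$ afterwards. The paper's version is terser — it simply asserts the almost-sure existence of such a time $N$ "by an argument similar to the one used to prove Proposition~\ref{prop:coupling}" — whereas you spell out the regeneration step and the bookkeeping; the substance is identical.
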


Note that this proposition in particular implies the convergence of the $K$-scenery seen from the front for any infinite-bin model $X$.

\begin{proof}
Let $a<b$ be the smallest two integers in the support of $\mu$ and let $K>0$ be a number large enough such that \eqref{eqn:assumptionK} holds. We then define the sequence of waiting times $T_0=0$ and
\[
  T_{k+1} = \inf\left\{ n > T_k: \xi_n \geq K \quad \text{or} \quad (n-m+1>T_k \text{ and } \xi_{n-m+1}^{n}=a^m) \right\}.
\]
The random walk $S_k:=\sum_{j=1}^k (2\ind{\xi_{T_k} \geq K} - 1)$ drifts to $-\infty$. In particular, for infinitely many integers $n\geq1$, we have $\sup_{k \geq n} S_k \leq S_n$.

Let $\gamma$ be a $(K+1)$-coupling word consisting only of the letters $a$ and $b$. Almost surely, there exists a time $N$ large enough such that the first $|\gamma|$ letters after $N$ spell the word $\gamma$ and the random walk $S$ observed after time $N + |\gamma|$ is always below its value at time $N + |\gamma|$.
Hence, by an argument similar to the one used to prove Proposition~\ref{prop:coupling}, for all $n \geq N+|\gamma|$, we have $\Pi_K(X_n) = \Pi_K(Y_n)$, which concludes the proof.
\end{proof}

\section{Speed of the infinite-bin model}
\label{sec:speed}

In this section, we use the stationary infinite-bin model $Y$ we constructed in the previous section to obtain formula~\eqref{eq:speedformula} for the speed of the infinite-bin model.
\begin{lemma}
\label{lem:useStationarity}
Let $\mu$ be a non-degenerate probability law on $\N$ and $(Y_n)_{n\in\Z}$ be a two-sided stationary infinite-bin model. We have $v_\mu = \P(F(Y_1)=1)$.
\end{lemma}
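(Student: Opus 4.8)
The plan is to use the stationarity of the process $(\Psi_{F(Y_n)}(Y_n))_{n \in \Z}$ to relate the speed $v_\mu$ to the long-run average of the front increments, and then to identify this average with $\P(F(Y_1) = 1)$. First I would record the basic facts about the increments $D_n := F(Y_{n+1}) - F(Y_n)$. Since $Y_{n+1} = \Phi_{\xi_{n+1}}(Y_n)$, a single move either adds a ball in the bin immediately right of the front (if $\xi_{n+1} = 1$, equivalently if the selected ball is the rightmost one) or adds a ball in an already-nonempty bin at or before the front; in the first case $D_n = 1$ and in the second $D_n = 0$. Hence $D_n \in \{0,1\}$, so $F(Y_1) - F(Y_0) \in \{0,1\}$ and $\E(D_n) = \P(D_n = 1) = \P(F(Y_{n+1}) = F(Y_n) + 1)$. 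By construction $F(Y_0) = 0$, so in particular $\E(D_0) = \P(F(Y_1) = 1)$.

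Next I would observe that the sequence $(D_n)_{n \in \Z}$ is stationary. Indeed, $D_n$ is a function of $(\Psi_{F(Y_n)}(Y_n), \xi_{n+1})$: knowing the configuration $Y_n$ up to translation and the next move $\xi_{n+1}$ determines whether the front advances. By Theorem~\ref{thm:biinfinite} and the remark following its proof, $(\Psi_{F(Y_n)}(Y_n))_{n \in \Z}$ is a stationary process, and since $\xi_{n+1}$ is independent of $\mathcal{F}_n$ with fixed law $\mu$, the pair $(\Psi_{F(Y_n)}(Y_n), \xi_{n+1})$ is stationary in $n$; therefore so is $(D_n)$. Writing $F(Y_n) = \sum_{k=0}^{n-1} D_k$ for $n \geq 1$, the ergodic theorem (or simply the $L^1$ law of large numbers for stationary sequences, together with the a.s.\ convergence we may invoke) gives that $F(Y_n)/n$ converges a.s.\ and in $L^1$ to $\E(D_0 \mid \mathcal{I})$, where $\mathcal{I}$ is the invariant $\sigma$-field; in particular $\E(F(Y_n)/n) = \E(D_0) = \P(F(Y_1) = 1)$ for every $n \geq 1$.

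Finally I would identify the limit with $v_\mu$. The point is that $(Y_n)_{n \geq 0}$ is itself an IBM($\mu$) started from the (random) initial configuration $Y_0$, driven by the i.i.d.\ sequence $(\xi_n)_{n \geq 1}$; by \eqref{eqn:defineSpeed}, which by~\cite{MR} holds for every $\mu$ and is independent of the initial configuration, we get $F(Y_n)/n \to v_\mu$ a.s. Comparing the two limits of $F(Y_n)/n$ yields $v_\mu = \E(D_0) = \P(F(Y_1) = 1)$ directly (the a.s.\ limit is the deterministic constant $v_\mu$, so it equals its expectation, which is $\P(F(Y_1)=1)$; alternatively one passes to expectations using $L^1$ convergence and uniform integrability of $F(Y_n)/n$, the latter being immediate from $0 \le F(Y_n)/n \le 1$).

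The main obstacle, or at least the only point requiring care, is the interchange of limit and expectation and the justification that the a.s.\ limit of $F(Y_n)/n$ is the constant $v_\mu$ rather than an a priori random invariant quantity. This is resolved by the uniform bound $0 \le D_n \le 1$, which forces $0 \le F(Y_n)/n \le 1$ and hence bounded (in particular uniform) integrability, combined with the fact that~\cite{MR} established \eqref{eqn:defineSpeed} with a \emph{deterministic} constant $v_\mu$ not depending on the initial configuration — so the stationary ergodic limit $\E(D_0 \mid \mathcal{I})$ must coincide a.s.\ with $v_\mu$. Once that is in place, taking expectations in $F(Y_1) - F(Y_0) = D_0$ with $F(Y_0) = 0$ and $D_0 \in \{0,1\}$ finishes the argument.
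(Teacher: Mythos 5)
Your argument is correct and follows essentially the same route as the paper: write $F(Y_n)$ as a telescoping sum of the increments $D_j\in\{0,1\}$, use stationarity to reduce to $\E(D_0)=\P(F(Y_1)=1)$, and identify the limit of $F(Y_n)/n$ with the deterministic constant $v_\mu$ via \eqref{eqn:defineSpeed} and dominated convergence (the detour through the ergodic theorem is harmless but unnecessary, since taking expectations directly suffices). One small slip: the front advances when the selected ball lies in the frontmost bin, i.e.\ when $\xi_{n+1}\leq Y_n(F(Y_n))$, not only when $\xi_{n+1}=1$; this does not affect the proof, which only uses $D_n\in\{0,1\}$.
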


\begin{proof}
Since $(Y_n)_{n\geq0}$ is an infinite-bin model with move distribution $\mu$, by \cite[Theorem 1.1]{MR} and dominated convergence, we have
\[
  v_\mu = \lim_{n \to \infty} \frac{1}{n} \E(F(Y_n)) = \lim_{n \to \infty} \frac{1}{n} \sum_{j=1}^n \E(F(Y_j) - F(Y_{j-1})).
\]
By the stationary property of $Y$, we also observe that
\[
\E(F(Y_j)-F(Y_{j-1})) = \E(F(Y_1)-F(Y_0)) = \P(F(Y_1)=1),
\]
thus $v_\mu = \P(F(Y_1)=1)$.
\end{proof}

We use this expression for $v_\mu$ in terms of the two-sided process $Y$ to prove Theorem~\ref{thm:main}.

\begin{proof}[Proof of Theorem~\ref{thm:main}]
Let $(\xi_n, n \in \Z)$ be i.i.d. random variables with law $\mu$. We introduce the random time
\[
  T= \inf\{ n \geq 0: \xi_{-n}^1 \in \mathcal{G} \cup \mathcal{B} \}.
\]
We first note that if $\xi_{-n}^0$ is a $1$-coupling word, then we know the value of $Y_0(0)$. In that case, $\xi_{-n}^1$ is either a good word or a bad word, depending on whether $\xi_1 \leq Y_0(0)$ or not. We conclude that $T <\tau_1 < \infty$ a.s. by Proposition~\ref{prop:coupling}. Lemma~\ref{lem:useStationarity} yields
\[
  v_\mu = \P(F(Y_1)=1) = \P( \xi_{-T}^1 \in \mathcal{G} ) = 1 - \P(\xi_{-T}^1 \in \mathcal{B}).
\]
Note that if $\xi_{-T}^1$ is good, then it is necessarily a minimal good word (if it had a good strict suffix, $T$ would have been smaller). Similarly, $\xi_{-T}^1\in\mathcal{B}$ implies $\xi_{-T}^1\in\mathcal{B}_m$. Moreover the support of $\xi_{-T}^1$ is the entire set $\mathcal{G}_m \cup \mathcal{B}_m$, as a good (resp. bad) word cannot have a bad (resp. good) suffix. Thus
\begin{align*}
  \P(\xi_{-T}^1 \in \mathcal{G}) = \P(\xi_{-T}^1 \in \mathcal{G}_m) &= \sum_{\alpha \in \mathcal{G}_m} \P(\xi_{-T}^1 = \alpha)\\
  &= \sum_{\alpha \in \mathcal{G}_m} \P(\xi_{-|\alpha|+2}^1 = \alpha) = \sum_{\alpha \in \mathcal{G}_m} w_\mu(\alpha),
\end{align*}
hence $v_\mu= \sum_{\alpha \in \mathcal{G}_m} w_\mu(\alpha)$. The equality $v_\mu=1- \sum_{\alpha \in \mathcal{B}_m} w_\mu(\alpha)$ follows from similar computations.
\end{proof}

\begin{remark}
\label{rem:criterion}
In order to make the formulas in Theorem~\ref{thm:main} effective, one needs a criterion to find the minimal good and bad words. Given a word $\alpha\in\mathcal{W}$, it suffices to test it against a finite set $\Sigma$ of configurations to determine whether it is good or bad: if $\alpha$ is $X$-good (resp. $X$-bad) for every $X\in\Sigma$, then it is good (resp. bad). Writing
\[
h= \max_{1 \leq i \leq |\alpha|} 1 + \alpha_i - i,
\]
the set $\Sigma$ can be taken to be any set of $2^{h-1}$ configurations with the front at position $0$ such that for any $X\neq X'$ in $\Sigma$, the positions of the rightmost $h$ balls in $X$ and $X'$ are not all the same.
\end{remark}

\section{Analyticity of \texorpdfstring{$C(p)$}{C(p)}}
\label{sec:analyticity}

Using the formula we obtained for the speed $v_\mu$, we are now able to prove the analyticity of the growth rate $C$ of the length of the longest path in Barak-Erd\H{o}s graphs.

\begin{proof}[Proof of Theorem \ref{thm:analyticity}]
For any $p,q \geq 0$, we write
\begin{equation}
\label{eq:bivariateseries}
  D(p,q)= \sum_{\alpha \in \mathcal{G}_m} p^{|\alpha|} q^{\sum_{j=1}^{|\alpha|} (\alpha_j - 1)}.
\end{equation}
As stated in Corollary~\ref{cor:Cpformula}, it follows from the coupling of Foss and Konstantopoulos \cite{FK} between infinite-bin models and Barak-Erd\H{o}s graphs that for any $0 < p\leq 1$,
\[
C(p)=v_{\mu_p}=D(p,1-p), 
\]
where $\mu_p$ denotes the geometric distribution of parameter $p$.

To prove that $C$ is analytic around some $p_0\in(0,1]$, it is enough to show that the series~\eqref{eq:bivariateseries} converges for some pair $(p',q')$ with $p'>p_0$ and $q'>1-p_0$. Indeed, one would then deduce that all the series of derivatives of $D(p,1-p)$ converge normally around $p_0$.

Recall that $T = \inf\{ n \geq 0: \xi_{-n}^1 \in \mathcal{G} \cup \mathcal{B} \}$. For any probability distribution $\mu$ on $\N$, we denote by $\E_{\mu}$ the expectation associated with the IBM($\mu$). By simple computations similar to the proof of Theorem~\ref{thm:main}, for any $r > 0$ we have
\[
  \E_{\mu_p}(r^{T+2}) \geq \E_{\mu_p}(r^{T+2} \ind{\xi_{-T}^1 \in \mathcal{G}}) = D(rp,1-p),
\]
As a result, to conclude the proof, it is enough to show that $T$ admits some exponential moments uniformly in $p$. More precisely, we will prove that for every $s \in (0,1]$, there exists $r_s>1$ such that
\begin{equation}
  \label{eqn:exponentialMoments}
  \forall p \in [s,1], \E_{\mu_p}(r_s^T) < \infty.
\end{equation}
Then, for any $0 < p_0 \leq 1$, choosing $p$ such that
\[
\max\left(\frac{p_0}{2},\frac{p_0}{r_{p_0/2}}\right) < p < p_0
\]
and setting $p'=pr_{p_0/2}$ and $q'=1-p$, one obtains the convergence of the series $D(p',q')$, which will prove Theorem~\ref{thm:analyticity}.

Recall from the proof of Theorem~\ref{thm:main} that $T$ is smaller than $\tau_1$, the smallest time such that $\xi_{-\tau_1}^0$ is a $1$-coupling word. To bound $\E_{\mu_p}(r^{\tau_1})$ we use a construction similar to the one in the proof of Proposition~\ref{prop:coupling}. Fix $s\in(0,1]$. We choose an integer $K \geq 1$ large enough such that $2 (1-s)^{1-K} \leq s$. Then for every $p \in [s,1]$, we have
\[
  2 \mu_p([K,\infty)) \leq 2 (1-s)^{1-K} \leq s \leq \mu_p(1).
\]
We now introduce the sequence defined by $T_0=0$ and for any $k\geq0$,
\[
  T_{k+1} = \sup\{ n < T_k: \xi_n = 1 \quad \text{or} \quad \xi_n \geq K\}.
\]
We also set $S_k = -k + 2\sum_{j = 1}^k \ind{\xi_{T_j} \geq K} $. For every $p \in [s,1]$, by the choice of $K$, $S$ is a nearest-neighbor random walk such that $\E(S_1) \leq - 1/3$. We denote by $(R_k, k \geq 0)$ the sequence of strictly descending ladder times of $S$ (i.e. $R_k$ is the $k$th time when $S$ reaches its record minimum) and by $\gamma$ the word consisting in $K+1$ times the letter $1$. Then, setting
\[
  L= \inf\left\{ k \in \N: \xi_{T_{R_{k(K+1)}}-K-1}^{T_{R_{k(K+1)}}-1} = \gamma \right\},
\]
we have $\tau_1 \leq -T_{R_{L(K+1)}} + K+1$.

As $(S_k)$ is a random walk with negative drift smaller than $-1/3$, for any $k \in \N$, the random variable $R_k$ is stochastically dominated by the sum of $k$ i.i.d. random variables $U_j$ with the law of the first hitting time of $-1$ by a nearest-neighbor random walk with drift $-1/3$. Moreover, we observe that
\[
  \E(r^{U_j}) = \frac{2}{3}r + \frac{1}{3}r \E(r^{U_j})^2,
\]
by decomposition with respect to the first step of the random walk. Therefore, for any $k \in \N$, $p \in [s,1]$ and $r < \frac{3}{2\sqrt{2}}$, we have
\[
  \E_{\mu_p}\left( r^{R_k} \right) \leq \left( \frac{3  - \sqrt{9-8r^2}}{2r} \right)^k.
\]

Similarly, we observe that for any $p \in [s,1]$, $-T_1$ is stochastically dominated by a geometric random variable with parameter $s$, as this is the minimal probability for obtaining a $1$. Then $-T_k$ is stochastically dominated by the sum of $k$ i.i.d. copies of a geometric random variable with parameter $s$. Thus, by conditioning with respect to $R_k$, as long as $1 < r < \frac{1}{1 - \frac{(3-2\sqrt{2})}{3}s}$, for all $p \in [s,1]$ and $k \in \N$ we have
\[
 \E_{\mu_p}(r^{-T_{R_k}}) \leq \E_{\mu_p}\left(\left(\E_{\mu_s}\left( r^{-T_1} \right)\right)^{R_k} \right) \leq \left( \frac{3  - \sqrt{9-8\left(\tfrac{sr}{1-(1-s)r}\right)^2}}{2\tfrac{sr}{1-(1-s)r}} \right)^k.
\]
Finally, as $L$ can be stochastically dominated by a geometric random variable with parameter $s^{K+1}$, which is independent of $(T_{R_k}, k \geq 1)$, it admits some finite exponential moments. We conclude that~$\tau_1$ also admits some exponential moments, uniformly in $p \in [s,1]$.
\end{proof}

We point out that, by computations similar to those above, one could show that for any probability distribution $\mu$ and for any $K \in \N$, there exists $r>1$ such that $\E_{\mu}(r^{\tau_K})<\infty$, hence $\P(\tau_K > n)$ decays exponentially fast with $n$.

\paragraph*{Acknowledgements}

We would like to thank an anonymous referee of our previous paper~\cite{MR} for a comment inspiring us to study the set of good words. S.R. acknowledges the support of the Fondation Simone et Cino Del Duca.

\label{Bibliography}
\bibliographystyle{plain}
\bibliography{bibliographie}

\Addresses

\end{document}